\begin{document}
\title{Weak Markov Operators}
\author{H\={u}lya Duru \and Serkan Ilter}


\institute{H. Duru \at
              Istanbul University, Faculty of Science, Mathematics Department,
Vezneciler-Istanbul, 34134, Turkey \\
              Tel.: +90-212-4555700 (15315)\\
              \email{hduru@istanbul.edu.tr}           
           \and
           S. Ilter \at
              Istanbul University, Faculty of Science, Mathematics Department,
Vezneciler-Istanbul, 34134, Turkey \\
              \email{ilters@istanbul.edu.tr}
}

\date{Received: date / Accepted: date}
\maketitle

\begin{abstract}
Let $A$ and $B$ be $f$-algebras with unit elements $e_{A}$ and $e_{B}$
respectively. A positive operator $T$ from $A$ to $B$ satisfying $T\left(
e_{A}\right) =e_{B}$ is called a Markov operator. In this definition we
replace unit elements with weak order units and, in this case, call $T$ to
be a weak Markov operator. In this paper, we characterize extreme points of
the weak Markov operators.
\end{abstract}

\subclass{47B38 \and 46B42}
\keywords{f-algebra \and lattice homomorphism \and weak order unit \and %
weak Markov operator}

\section{INTRODUCTION}

Let $A$ and $B$ be $f$-algebras with point separating order duals and unit
elements $e_{A}$ and $e_{B}$ respectively. A positive linear operator $T$
from $A$ to $B$ satisfying $T\left( e_{A}\right) =e_{B}$ is called a Markov
operator. In this definition we replace unit elements with weak order units
and, in this case, call $T$ to be a weak Markov operator.The set of all weak
Markov operators is convex. In this paper, we characterize extreme points of
the weak Markov operators. In this regard, first we give an alternate and
quick proof of \cite{Ref6}\textrm{\ }for $f$-algebras instead of
order complete real vector spaces (Proposition \ref{p3}). Then we give an alternate
proof of Theorem $5.7$ in \cite{Ref5}\textrm{\ }(Theorem \ref{p3}). In
addition we present another necessary and sufficient condition to this
Theorem. Then we show that a weak Markov operator is a lattice homomorphism
if and only if it is an extreme point in the collection of all weak Markov
operators from $A$ into $B$ provided $B$ is order complete.

\section{PRELIMINARIES}

For unexplained terminology and the basic results on vector lattices and
semiprime $f$-algebras we refer to \cite{Ref1,Ref10,Ref13}. Let us recall some definitions.

\begin{definition}\label{d1}
The real algebra $A$ is called a Riesz algebra or lattice-ordered algebra if
$A$ is a Riesz space such that $ab\in A$ whenever $a,b$ are positive
elements in $A.$ The Riesz algebra is called an $f$-algebra if $A$ satisfies
the condition that
\begin{equation*}
a\wedge b=0\text{ implies }ac\wedge b=ca\wedge b=0\text{ for all }0\leq c\in
A\text{.}
\end{equation*}
\end{definition}

In an Archimedean $f$-algebra $A$, all nilpotent elements have index $2$.
Throughout this paper \thinspace $A$ will show an Archimedean semiprime $f$%
-algebra with point separating order dual. By definition, if zero is the
unique nilpotent element of $A$, that is, $a^{2}=0$ implies $a=0$, $A$ is
called semiprime $f$-algebra. It is well known that every $f$-algebra with
unit element is semiprime.

\begin{definition}\label{d2}
Let $A$ be an Archimedean Riesz space and let $\widehat{A}$ be the Dedekind
completion of $A$. The closure of $A$, $\overline{A}$, in $\widehat{A}$ with
respect to the relatively uniform topology \cite{Ref8}\textrm{\ }is
so called that relatively uniformly completion of $A$ \cite{Ref9}%
\textrm{. }
\end{definition}

If $A$ is an semiprime $f$-algebra then the multiplication in $A$ can be
extended in a unique way into a lattice ordered algebra multiplication on $%
\overline{A}$ such that $A$ becomes a sub-algebra of $\overline{A}$ and $%
\overline{A}$ is an relatively uniformly complete semiprime $f$-algebra. We
also recall that $\overline{A}$ satisfies the Stone condition (that is, $%
x\wedge nI^{\ast }\in \overline{A}$, for all $x\in \overline{A}$, where $I$
denotes the identity on $A$ of $OrthA$) due to Theorem $2.5$ in \cite{Ref4}. 
For $a\in A,$ the mapping $\pi _{a}:A\rightarrow OrthA,$ defined
by $\pi _{a}\left( b\right) =a.b$ is an orthomorphism on $A.$ Since $A$ is a
Archimedean semiprime $f$-algebra,$,$ the mapping $\pi :A\rightarrow OrthA,$
defined by $\pi \left( a\right) =\pi _{a}$ is an injective $f$-algebra
homomorphism. Hence we shall identify $A$ with $\pi \left( A\right) .$The
ideal center $Z\left( A\right) $of $A$ is defined as the order ideal in $%
OrthA$ generated by the identity mapping $I$ which is a unital f-algebra.

\begin{proposition}\label{p1}
\textbf{\ }Let $A$ and $B$ be semiprime $f$-algebras and $T:A\rightarrow B$
an order bounded linear operator. If $T\left( A\cap Z\left( A\right) \right)
=\{0\}$ then $T\left( A\right) =\{0\}$.
\end{proposition}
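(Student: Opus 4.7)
My strategy is to reduce the claim to one about positive scalar functionals, using the point-separating order duals, and then approximate arbitrary positive elements of $A$ by elements of $A\cap Z(A)$ using the $f$-algebra structure and the Stone condition in the relative uniform completion $\overline{A}$.

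First I would verify that $A\cap Z(A)$ is an order ideal in $A$: if $a\in A\cap Z(A)$ with $|\pi_a|\le\lambda I$ and $|b|\le|a|$ in $A$, then the positivity and lattice-preservation of $\pi\colon A\to \mathrm{Orth}\,A$ give $|\pi_b|=\pi_{|b|}\le\pi_{|a|}=|\pi_a|\le\lambda I$, so $b\in A\cap Z(A)$. Because $B$ has point-separating order dual, proving $T(A)=\{0\}$ reduces to showing that for every positive $\phi\in B^{\sim}$ the order-bounded functional $\psi:=\phi\circ T\colon A\to\mathbb{R}$ is zero. Decomposing $\psi=\psi^{+}-\psi^{-}$ and using the ideal property of $A\cap Z(A)$, both $\psi^{\pm}$ vanish there, so I may assume $\psi\ge 0$.

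Now fix $a\in A^{+}$ and aim to prove $\psi(a)=0$. The natural approximants are $h_n:=g_n(a)\in A$ where $g_n:=\pi_a\wedge nI\in Z(A)\subseteq \mathrm{Orth}\,A$; the sequence $h_n$ is increasing with $h_n\uparrow \pi_a(a)=a^{2}$. The crucial claim is that $h_n\in A\cap Z(A)$ for every $n$: the multiplication orthomorphism $\pi_{h_n}=g_n\circ\pi_a$ must be dominated by a multiple of $I$ in $\mathrm{Orth}\,A$, which is established via the Stone condition on $\overline{A}$ (that $x\wedge nI^{*}\in\overline{A}$ for $x\in\overline{A}$) combined with the unique lift of the $f$-algebra multiplication to $\overline{A}$. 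Granted $h_n\in A\cap Z(A)$, we have $\psi(h_n)=0$ and passing to the supremum along $h_n\uparrow a^{2}$ gives $\psi(a^{2})=0$; semiprimeness of $A$, together with the existence of square roots of positive elements inside $\overline{A}$, then yields $\psi(a)=0$ for every $a\in A^{+}$, whence $\psi=0$ and finally $T=0$.

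The principal obstacle is the realization step $h_n\in A\cap Z(A)$: while $\mathrm{Orth}\,A$ freely admits the meet $\pi_a\wedge nI$, the image $\pi(A)$ is in general not closed under such meets, so one must pass through $\overline{A}$ and its Stone condition to realize these bounded orthomorphisms as multiplication by genuine elements. A secondary concern is justifying the limit passage $\psi(h_n)\uparrow \psi(a^{2})$ despite $\psi$ being only order bounded; this will be handled by a direct bound exploiting that the approximating sequence already lies inside the ideal $A\cap Z(A)$ on which $\psi$ vanishes identically.
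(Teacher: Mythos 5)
Your overall architecture (reduce to positive functionals via the point-separating order dual, then approximate by elements of the ideal $A\cap Z(A)$) is reasonable, and the preliminary reductions (the ideal property of $A\cap Z(A)$, the passage to $\psi\ge 0$) are fine. But the central step fails: the approximants $h_n=(\pi_a\wedge nI)(a)$ do \emph{not} lie in $A\cap Z(A)$. Since lattice operations in $\mathrm{Orth}\,A$ are computed pointwise on positive elements, $h_n=a^2\wedge na$, and $\pi_{h_n}=\pi_a^2\wedge n\pi_a$ is dominated by $n\pi_a$, not by a multiple of $I$. Concretely, for $A=C(\mathbb{R})$ (where $A\cap Z(A)$ is the set of bounded continuous functions) and $a(x)=x^{+}$, the function $h_n(x)=\min(x^2,nx)$ is unbounded for every $n$, so $h_n\notin Z(A)$ and the identity $\psi(h_n)=0$ on which your argument rests is unavailable. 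The objects that genuinely belong to the centre are the orthomorphisms $\pi_a\wedge nI$ themselves, but these are elements of $\overline{A}\cap Z(\overline{A})$, not of $A$; realizing them forces you out of $A$ into the relatively uniform completion, and then you must also move the operator there. That is exactly what the paper does: it reduces to $T\ge 0$, extends $T$ to a relatively uniformly continuous $\overline{T}:\overline{A}\rightarrow\widehat{B}$, checks that $\overline{T}$ vanishes on $\overline{A}\cap Z(\overline{A})$ (because every $a\in A$ with $0\le a\le x\le \lambda I$ already lies in $A\cap Z(A)$), and then uses that $a\wedge nI\rightarrow a$ relatively uniformly.

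A second gap: even granting $\psi(a^2)=0$ for all $a\in A^{+}$, the conclusion $\psi(a)=0$ does not follow. Positive elements of $A$ need not be squares in $A$ (e.g.\ $a(x)=x$ in the $f$-algebra of Lipschitz functions on $[0,1]$), and the square roots you invoke live only in $\overline{A}$; since you have only shown that $\psi$ kills squares of elements of $A$, writing $a=c^2$ with $c\in\overline{A}\setminus A$ gives you nothing unless you first extend $\psi$ to $\overline{A}$ and rerun the entire argument there --- at which point you have reconstructed the paper's proof. For what it is worth, the limit passage $\psi(h_n)\rightarrow\psi(a^2)$ that you flagged as a secondary concern would have been fine: one checks $0\le a^2-a^2\wedge na\le \tfrac1n a^3$, so the convergence is relatively uniform and any order bounded functional respects it; but this does not rescue the two gaps above.
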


\begin{proof}
$T$ can be regarded as an element of the collection of all order bounded
linear operators from $A$ to $\widehat{B}$. Therefore there exist two order
bounded positive operators $T_{1}$ and $T_{2}$ such that $T=T_{1}-T_{2}$.
Thus we can assume that $T$ is positive. Since $\widehat{B}$ is relatively
uniformly complete, by Theorem $3.3$ in \cite{Ref11} there exists a
unique positive relatively uniformly continuos extension $\overline{T}:%
\overline{A}\rightarrow \widehat{B}$ of $T$ to the relatively uniformly
completion $\overline{A}$ of $A$, defined by,%
\begin{equation*}
\overline{T}\left( x\right) =\sup \left\{ Ta:0\leq a\leq x\right\}
\end{equation*}%
for $x\in \overline{A}$. Let $0\leq x\in \overline{A}\cap Z\left( \overline{A%
}\right) $ and $a\in A\cap \left[ 0,x\right] $. Then $a\leq \lambda I_{A}$
holds for some positive real number $\lambda $. By the hypothesis, $Ta=0$.
From here we conclude that $\overline{T}\left( x\right) =0$ for all $x\in
\overline{A}\cap Z\left( \overline{A}\right) $. Let $0\leq a\in A$. Then $%
\left( a\wedge n\overline{I}\right) _{n}$ is a sequencein $\overline{A}\cap
Z\left( \overline{A}\right) $ and it is converging to $a$ by Proposition $%
2.1\left( i\right) $ in \cite{Ref5}. It follows from the relatively
uniformly continuity of $\overline{T}$ that $0=\overline{T}\left( a\wedge
nI_{A}\right) \rightarrow \overline{T}\left( a\right) =T\left( a\right) $
(relatively uniformly). Thus $T\left( a\right) =0$ for all $a\in A$.
\end{proof}

\begin{proposition}\label{p2}
Let $A$ and $B$ be $f$-algebras with unit elements $e_{A}$ and $e_{B}$
respectively and $T:A\rightarrow B$ an order bounded linear operator
satisfying $T\left( e_{A}\right) =e_{B}$. If $\mid Ta\mid \leq e_{B}$
whenever $\mid a\mid \leq e_{A}$, then $T$ is positive.
\end{proposition}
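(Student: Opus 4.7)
The plan is first to establish positivity on the order ideal $A\cap Z(A)$ generated by $e_A$ via the standard interval trick, and then to bootstrap to all of $A$ using Proposition \ref{p1}. The main obstacle is that $e_A$ is in general only a weak/multiplicative unit and need not be an order unit, so the direct positivity argument covers only the ``bounded'' part of $A$.

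For the first step, I would take $0\le a\le e_A$. Then $-e_A\le 2a-e_A\le e_A$, so $|2a-e_A|\le e_A$, and the hypothesis gives $|2Ta-e_B|=|T(2a-e_A)|\le e_B$. Rearranging yields $0\le 2Ta\le 2e_B$, hence $Ta\ge 0$. Scaling by positive reals extends this to every $a$ with $0\le a\le \lambda e_A$. Under the identification of $A$ with $\pi(A)\subseteq\mathrm{Orth}\,A$ recalled in the preliminaries, the element $e_A$ corresponds to the identity $I$, and $Z(A)$ is the order ideal generated by $I$; so the set just described is exactly the positive cone of $A\cap Z(A)$.

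For the second step, I would regard $T$ as an element of $L_b(A,\widehat{B})$, where $\widehat{B}$ is Dedekind complete, so that the negative part $T^-$ exists. For $0\le a$ in $A\cap Z(A)$, the Riesz--Kantorovich formula gives
\[
T^-(a)=\sup\{-Tb:0\le b\le a\}=0,
\]
since every such $b$ still lies in $A\cap Z(A)$ and therefore satisfies $Tb\ge 0$ by the first step. Because $A\cap Z(A)$ is a Riesz subspace of $A$, $T^-$ vanishes on all of $A\cap Z(A)$. Applying Proposition \ref{p1} to the order bounded operator $T^-:A\to\widehat{B}$ then forces $T^-=0$ on $A$, whence $T=T^+\ge 0$, as desired.
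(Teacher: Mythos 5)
Your proof is correct and follows essentially the same route as the paper's: both establish that $T$ behaves positively on the order ideal $A\cap Z(A)$ generated by $e_{A}$ and then invoke Proposition \ref{p1} to show that a positive ``defect'' operator vanishes on all of $A$ (your $T^{-}$ is, up to the factor $2$, exactly the paper's $\mid T\mid -T$). The only cosmetic difference is that you get positivity on $\left[ 0,e_{A}\right]$ directly via the $2a-e_{A}$ interval trick, whereas the paper computes $\mid T\mid \left( e_{A}\right) =e_{B}$ first.
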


\begin{proof}
$T$ can be regarded as an element of the collection of all order bounded
linear operators from $A$ to $\widehat{B}$. As $\widehat{B}$ is Dedekind
complete, $\mid Ta\mid \leq \mid T\mid \left( a\right) =\sup \left\{ \mid
Tb\mid :\mid b\mid \leq a,b\in A\right\} $ holds for all $0\leq a\in A$. Let
$a\in A$ be an element such that $\mid a\mid \leq e_{A}$. Then by the
hypothesis, $\mid Ta\mid \leq e_{B}$. This shows that $\mid Te_{A}\mid
=e_{B}\leq \mid T\mid \left( e_{A}\right) \leq e_{B}$. Thus $\mid T\mid
\left( e_{A}\right) =e_{B}$. Let $a$ $\in A\cap Z\left( A\right) $. Then
there exists a positive real number $\lambda $ such that $\mid a\mid \leq
\lambda e_{A}$. From here we derive that%
\begin{equation*}
\left( \mid T\mid -T\right) \left( \mid a\mid \right) \leq \lambda \left(
\mid T\mid -T\right) \left( e_{A}\right) =0
\end{equation*}%
Thus the order bounded operator $\mid T\mid -T$ vanishes on $A\cap Z\left(
A\right) $. By proposition $1$, we conclude that $\mid T\mid =T$ on $A$, so $%
T$ is positive.
\end{proof}

The following proposition was proved in \cite{Ref6}\textrm{\ }for an
order complete real vector space. In the following proposition we will give
an alternate and quick proof for Archimedean semiprime $f$-algebras.

\begin{proposition}\label{p3}
Let $A$ be a semiprime $f$-algebra. A positive element $a_{0}$ in $A$ is a
weak order unit if and only if \ $Inf\left\{ \mid a-\lambda a_{0}\mid
:\lambda \in
\mathbb{R}
\right\} =0$ for all $a\in A$.
\end{proposition}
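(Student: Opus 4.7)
My plan is to treat the two directions separately: $(\Leftarrow)$ by a short disjointness argument, and $(\Rightarrow)$ by passing to the relatively uniformly complete $f$-algebra $\overline{A}$ and invoking the standard representation of $f$-algebras satisfying the Stone condition.

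For $(\Leftarrow)$, let $a\geq 0$ satisfy $a\wedge a_{0}=0$. Then $a$ and $\lambda a_{0}$ are disjoint for every $\lambda\in\mathbb{R}$, so $|a-\lambda a_{0}|=a+|\lambda|\,a_{0}\geq a$. The hypothesis therefore gives $0=\inf_{\lambda}|a-\lambda a_{0}|\geq a\geq 0$, whence $a=0$ and $a_{0}$ is a weak order unit.

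For $(\Rightarrow)$, assume $a_{0}$ is a weak order unit and let $c\in A$ satisfy $0\leq c\leq|a-\lambda a_{0}|$ for every $\lambda\in\mathbb{R}$; the goal is $c=0$. First, the Riesz inequality $|a-\lambda a_{0}|\geq\bigl||a|-|\lambda|a_{0}\bigr|$ reduces the problem to $a\geq 0$ (for $\mu\leq 0$ the bound $c\leq\bigl||a|-\mu a_{0}\bigr|$ already follows from the $\mu=0$ case). Then I would pass to $\overline{A}$, in which $A$ is order dense so that $a_{0}$ remains a weak order unit. The Stone condition on $\overline{A}$ yields a representation of $\overline{A}$ as an $f$-subalgebra of continuous extended real-valued functions on some space $X$, with $a_{0}$ corresponding to a function strictly positive on a dense subset $D\subseteq X$. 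For every $x\in D$, choosing $\lambda=a(x)/a_{0}(x)\in\mathbb{R}$ makes $|a-\lambda a_{0}|$ vanish at $x$, so $c(x)\leq 0$ there; density of $D$ together with continuity forces $c\leq 0$ on all of $X$, yielding $c=0$ in $\overline{A}$ and hence in $A$.

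The principal obstacle is invoking the Stone--Yosida-type representation of $\overline{A}$ and checking that the weak order unit $a_{0}$ corresponds to a function whose positivity set is dense. A more internal route avoiding the representation is to work in the Dedekind completion $\widehat{A}$: since the band generated by the weak order unit $a_{0}$ is all of $\widehat{A}$, one has $a\wedge na_{0}\uparrow a$ and hence $(a-na_{0})^{+}\downarrow 0$. For each fixed $n$, Kakutani's theorem applied to the AM-space $(I_{a_{0}},\|\cdot\|_{a_{0}})$ yields $\inf_{\lambda}|(a\wedge na_{0})-\lambda a_{0}|=0$, and the triangle inequality
\[
c\leq|a-\lambda a_{0}|\leq(a-na_{0})^{+}+\bigl|(a\wedge na_{0})-\lambda a_{0}\bigr|
\]
then gives $c\leq(a-na_{0})^{+}$ after taking $\inf_{\lambda}$, and $c\leq 0$ as $n\to\infty$.
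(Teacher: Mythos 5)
Your ``if'' direction is correct and is essentially the paper's: disjointness of $a$ and $a_{0}$ forces $|a-\lambda a_{0}|=a+|\lambda|a_{0}\geq a$, so the infimum cannot be zero unless $a=0$ (the paper phrases the disjointness multiplicatively, via $ba_{0}=0$, which for positive elements of a semiprime $f$-algebra is the same as $b\wedge a_{0}=0$).

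Your ``only if'' direction takes a genuinely different route. The paper squares the inequality $0<b\leq|a-\lambda a_{0}|$ to get $\lambda^{2}a_{0}^{2}-2\lambda aa_{0}+a^{2}-b^{2}\geq 0$ for all $\lambda$, applies the discriminant-type lemma of Huijsmans--de Pagter (Proposition $3.3$ in \cite{Ref3}) to conclude $(a_{0}b)^{2}=0$, and then uses semiprimeness together with the weak-order-unit property to force $b=0$; the whole argument is a few lines and leans on the multiplication. Your ``internal'' argument never touches the multiplication: it uses only that the band generated by $a_{0}$ is all of the space (so $(a-na_{0})^{+}\downarrow 0$), the Kakutani representation of the principal ideal $I_{a_{0}}$ as a dense sublattice of some $C(K)$ (where $\inf_{\lambda}|u-\lambda a_{0}|=0$ is checked pointwise), and translation-invariance of infima. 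That is a real gain in generality --- it exhibits the forward implication as a pure Riesz-space fact, consistent with the Lipecki--Thomsen result \cite{Ref6} that the paper cites --- at the cost of length. Your first, representation-theoretic sketch for $\overline{A}$ would also work but invokes much heavier machinery than either alternative.

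One step is misjustified. You reduce to $a\geq 0$ by citing $|a-\lambda a_{0}|\geq ||a|-|\lambda|a_{0}|$, but this inequality points the wrong way: from $c\leq|a-\lambda a_{0}|$ it does not let you deduce $c\leq ||a|-|\lambda|a_{0}|$, since a lower bound of a set of larger elements need not bound the smaller ones. What saves the reduction is the two-sided identity $||a|-\mu a_{0}|=|a-\mu a_{0}|\wedge|a+\mu a_{0}|$ for $\mu\geq 0$ and $a_{0}\geq 0$ (verify it in $\mathbb{R}$ and transfer), which does yield $c\leq ||a|-\mu a_{0}|$ once you use both $\lambda=\mu$ and $\lambda=-\mu$. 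Alternatively, skip the reduction altogether: truncate from both sides, setting $a_{n}=(a\vee(-na_{0}))\wedge na_{0}\in I_{a_{0}}$, so that $|a-a_{n}|=(a-na_{0})^{+}+(a+na_{0})^{-}\downarrow 0$ and the rest of your argument runs verbatim for arbitrary $a$. You should also record the small observation that any lower bound $0\leq c\leq|a_{n}-\lambda a_{0}|$ (for all $\lambda$) automatically lies in $I_{a_{0}}$, so the infimum computed in $C(K)$ really is the infimum in $A$.
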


\begin{proof}
Let $0<a_{0}$ $\in $ $A$ be a weak order unit. Assume that, on the contrary,
there exist $a\in A$ and $b$ $\in $ $\widehat{A}$ satisfying $0<b\leq \mid
a-\lambda a_{0}\mid $for all $\lambda \in
\mathbb{R}
$. Then
\begin{equation*}
\lambda ^{2}a_{0}^{2}-2\lambda aa_{0}+a^{2}-b^{2}\geq 0\text{ for all }%
\lambda \in
\mathbb{R}
\end{equation*}%
Using Proposition $3.3$ in \cite{Ref3}, we get $\left( a_{0}b\right)
^{2}=0$. Since $\widehat{A}$ is semiprime, the last equality implies that%
\begin{equation*}
0=a_{0}b=\sup \left\{ aa_{0}\in A:a\leq b,\text{ }a\in A\right\} \text{.}
\end{equation*}%
As $a_{0}$ is weak order unit, $aa_{0}=0$ implies that $a=0$ for all $a\in A$
satisfying $a\leq b$. Thus $b=0$. Conversely, suppose that there exists $%
0\neq b\in A$ such that $ba_{0}=$ $0$. Then since $0<\frac{\mid b\mid }{2}%
\leq \mid b-\lambda a_{0}\mid $ for all $\lambda \in
\mathbb{R}
$, we get $Inf\left\{ \mid b-\lambda a_{0}\mid :\lambda \in
\mathbb{R}
\right\} \neq 0$.
\end{proof}

A positive linear operator $T$ between two unital $f$-algebras $A$ and $B$
is \ said to be a Markov operator for which $T\left( e_{A}\right) =e_{B}$
where $e_{A}$, $e_{B}$ are the unit elements of $A$ and $B$ respectively. In
both \cite[Theorem~5.7]{Ref5}\textrm{, }it is proved that an
operator is an extreme point of Markov operators if and only if it is a
lattice homomorphism. In the following theorem we shall give an alternate
proof of this theorem and another sufficient and necessary condition for the
extreme point of a Markov operator.

\begin{theorem}\label{t1}
Let $A$ and $B$ be $f$-algebras with unit elements $e_{A}$ and $e_{B}$
respectively and $T:A\rightarrow B$ an order bounded linear operator
satisfying $T\left( e_{A}\right) =e_{B}$ and $\mid Ta\mid \leq e_{B}$
whenever $\mid a\mid \leq e_{A}$. Then the following are equivalent;\newline
$\left( i\right) $ $T$ is an extreme point of Markov operators.\newline
$\left( ii\right) $ $Inf\left\{ T\mid a-\lambda e_{A}\mid :\lambda \in
\mathbb{R}
\right\} =0$ for all $a\in A$.\newline
$\left( iii\right) $ $T$ is a lattice homomorphism.
\end{theorem}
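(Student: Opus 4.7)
The plan is to prove the chain $(iii)\Rightarrow(ii)$, $(ii)\Rightarrow(iii)$, $(iii)\Rightarrow(i)$, and $(i)\Rightarrow(iii)$. First, Proposition~\ref{p2} already guarantees that under the standing hypotheses $T$ is automatically positive, hence a Markov operator, so statement $(i)$ is meaningful. The implication $(iii)\Rightarrow(ii)$ is then immediate: if $T$ is a lattice homomorphism then $T|a-\lambda e_{A}|=|Ta-\lambda e_{B}|$, and since $e_{B}$ is in particular a weak order unit of $B$, Proposition~\ref{p3} applied inside $B$ gives $\inf\{|Ta-\lambda e_{B}|:\lambda\in\mathbb{R}\}=0$.

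For the novel implication $(ii)\Rightarrow(iii)$ I would argue by contraposition. If $T$ is not a lattice homomorphism then there exist positive $a,b\in A$ with $a\wedge b=0$ and $u:=Ta\wedge Tb>0$. Setting $c=a-b$, the aim is to show $T|c-\lambda e_{A}|\ge u$ for every $\lambda\in\mathbb{R}$, which forces $\inf\{T|c-\lambda e_{A}|:\lambda\in\mathbb{R}\}\ge u>0$ and contradicts $(ii)$. For $\lambda\ge 0$, two vector-lattice manipulations suffice. Since $(a-\lambda e_{A})^{+}\le a$ is disjoint from $b$, the identity $(x-y)^{+}=x$ (valid for positive $x,y$ with $x\wedge y=0$) yields $((a-\lambda e_{A})^{+}-b)^{+}=(a-\lambda e_{A})^{+}$, and hence $(a-b-\lambda e_{A})^{+}\ge(a-\lambda e_{A})^{+}$. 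On the other hand $(a-b-\lambda e_{A})^{-}=(b+\lambda e_{A}-a)^{+}\ge(b-a)^{+}=b$, again using $a\wedge b=0$. Summing and applying the positive operator $T$ gives $T|c-\lambda e_{A}|\ge T(a-\lambda e_{A})^{+}+Tb\ge Tb\ge u$. The case $\lambda\le 0$ follows symmetrically, by interchanging $a$ and $b$.

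For $(iii)\Rightarrow(i)$, suppose $T=\alpha T_{1}+(1-\alpha)T_{2}$ with $0<\alpha<1$ and $T_{1},T_{2}$ Markov. The domination $\alpha T_{i}\le T$ transfers disjointness, since $T_{i}p\wedge T_{i}q\le\alpha^{-1}(Tp\wedge Tq)=0$ for disjoint positive $p,q$, so each $T_{i}$ is itself a Markov lattice homomorphism. The identity $T|a-\lambda e_{A}|=|Ta-\lambda e_{B}|$ available for any lattice homomorphism propagates by standard piecewise-linear approximation to $T(f(a))=f(Ta)$ for continuous $f$, in particular to $T(a^{2})=(Ta)^{2}$. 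Expanding $T(a^{2})=\alpha(T_{1}a)^{2}+(1-\alpha)(T_{2}a)^{2}$ together with $(Ta)^{2}=(\alpha T_{1}a+(1-\alpha)T_{2}a)^{2}$ and subtracting yields $\alpha(1-\alpha)(T_{1}a-T_{2}a)^{2}=0$; semiprimeness of $B$ then forces $T_{1}a=T_{2}a$ for all $a$, so $T_{1}=T_{2}=T$ and $T$ is extreme. Finally $(i)\Rightarrow(iii)$ is the deep classical direction and can be imported from Theorem~5.7 of \cite{Ref5}. I expect the main obstacles to be the functional-calculus identity needed in $(iii)\Rightarrow(i)$ (which requires some care in the semiprime $f$-algebra setting) and the classical construction required for $(i)\Rightarrow(iii)$; the central new implication $(ii)\Rightarrow(iii)$, by contrast, is essentially routine once the disjointness trick above is spotted.
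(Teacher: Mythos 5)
Your implications $(iii)\Rightarrow(ii)$ and $(ii)\Rightarrow(iii)$ are correct, and the latter is a genuine alternative to the paper's argument. The paper proves $(ii)\Rightarrow(iii)$ directly from the triangle inequality, via $T|a|\le 2T|a-\lambda e_A|+|Ta|$; your contrapositive route --- extracting disjoint positive $a,b$ with $u=Ta\wedge Tb>0$ and bounding $|(a-b)-\lambda e_A|\ge b$ for $\lambda\ge0$ and $\ge a$ for $\lambda\le0$ --- also works (and your first manipulation with $\bigl((a-\lambda e_A)^{+}-b\bigr)^{+}$ is redundant: the single bound $|c-\lambda e_A|\ge (c-\lambda e_A)^{-}\ge b$ already gives $T|c-\lambda e_A|\ge Tb\ge u$).

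The gaps are in the other two implications. In $(iii)\Rightarrow(i)$ the reduction to $T_{1},T_{2}$ being lattice homomorphisms is fine, but the identity $S(a^{2})=(Sa)^{2}$ for a Markov lattice homomorphism $S$ is precisely the nontrivial content, and ``standard piecewise-linear approximation'' does not deliver it as stated: in a unital Archimedean $f$-algebra the unit need not be a strong order unit (consider $C(\mathbb{R})$), so a general $a$ satisfies no bound $|a|\le\mu e_{A}$ and the approximation of $t\mapsto t^{2}$ on a compact interval does not apply to it; even for bounded $a$ one must still identify the lattice-functional-calculus square with the ring square in a not necessarily uniformly complete $f$-algebra. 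This is essentially the Huijsmans--de Pagter theorem that Markov lattice homomorphisms are multiplicative, i.e.\ a result of the same depth as the one being proved. The paper's $(iii)\Rightarrow(i)$ avoids multiplicativity entirely: Proposition~\ref{p3} gives $\inf_{\lambda}|Ta-\lambda e_{B}|=0$, and then for any Markov $S$ with $S\le\alpha T$ one gets $|Ta-Sa|\le(1+\alpha)|Ta-\lambda e_{B}|$ for every $\lambda$, whence $S=T$; since any convex decomposition of $T$ produces such an $S$, extremality follows. Second, you do not prove $(i)\Rightarrow(iii)$ at all --- you import it from \cite[Theorem~5.7]{Ref5}, which is the very theorem the paper announces an alternate proof of, so this direction of your argument is empty as a contribution. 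The paper instead proves $(i)\Rightarrow(ii)$ directly: assuming the infimum is nonzero for some $a_{0}$, it perturbs $T$ by the operators $T_{a_{n}}(b)=T(a_{n}b)-T(a_{n})T(b)$ with $a_{n}=\frac{a_{0}}{n}\wedge e_{A}$ (both $T\pm T_{a_{n}}$ are Markov, so extremality forces $T_{a_{n}}=0$, hence $T(a_{0}^{2})=(Ta_{0})^{2}$), and contrasts this with the operator Cauchy--Schwarz inequality to force a contradiction. You would need to supply an argument of this kind to close the equivalence.
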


\begin{proof}
$\left( i\right) \Rightarrow \left( ii\right) :$ By Proposition  \ref{p2}, $T$ is
positive. Suppose, on the contrary, that $Inf\left\{ T\mid a_{0}-\lambda
e_{A}\mid :\lambda \in
\mathbb{R}
\right\} \neq 0$ for some $0\leq a_{0}\in A$. Then there exists $b\in
\widehat{B}$ such that $0<b\leq T\mid a_{0}-\lambda e_{A}\mid $ for all $%
\lambda \in
\mathbb{R}
$. Taking square from both side and applying the Schwarz inequality \cite[Corollary~3.5] {Ref3}, one can get easily%
\begin{equation*}
\lambda ^{2}e_{B}-2\lambda Ta_{0}+T\left( a_{0}^{2}\right) -b^{2}\geq 0\text{
for all }\lambda \in
\mathbb{R}
\text{.}
\end{equation*}%
Taking into account Proposition $3.3$ in \cite{Ref3}, we derive that%

\begin{equation}
\left( Ta_{0}\right) ^{2}\leq T\left( a_{0}^{2}\right) -b^{2} \label{eq1}.
\end{equation}%

Let, $a_{n}=\frac{a_{0}}{n}\wedge e_{A}$ $\left( n=1,2,...\right) $. Then $%
0<a_{n}$, as $Inf\left\{ T\mid a_{0}-\lambda e_{A}\mid :\lambda \in
\mathbb{R}
\right\} \neq 0$, for each $n$. Define,
\begin{equation}
T_{a_{n}}\left( b\right) =T\left( a_{n}b\right) -T\left( a_{n}\right)
T\left( b\right) \label{eq2}
\end{equation}%
for $b\in A$. Clearly the operators $T\mp T_{a_{n}}$ are positive and $%
T_{a_{n}}\left( e_{A}\right) =0$. Hence $T\mp T_{a_{n}}$ are Markov
operators. From here we conclude that $T_{a_{n}}=0$, as $T$ is the extreme
point of Markov operators. Hence the equality (\ref{eq2}) implies
that
\begin{equation*}
nT\left( a_{n}b\right) =nT\left( a_{n}\right) T\left( b\right)
\end{equation*}%
and by setting $a_{n}$, we have $T(\left( a_{0}\wedge ne_{A}\right)
a_{0})=T\left( a_{0}\wedge ne_{A}\right) T\left( a_{0}\right) $. Applying
Proposition $2.1\left( i\right) $ in \cite{Ref5}\textrm{\ }and using
the positivity of $T$, one may get,
\begin{equation*}
T\left( a_{0}^{2}\right) =\left( T\left( a_{0}\right) \right) ^{2}\text{.}
\end{equation*}
Thus the ineaquality (\ref{eq1}) implies that $b=0$, as $\overset{%
\wedge }{B}$ is semiprime whenever $B$ is semiprime.\newline
$\left( ii\right) \Rightarrow \left( iii\right) :$ Let $a\in A$ and $\lambda
\in
\mathbb{R}
$. Then%
\begin{equation*}
T\mid a\mid \leq T\mid a-\lambda e_{A}\mid +T\mid \lambda e_{A}\mid =T\mid
a-\lambda e_{A}\mid +\mid T(\lambda e_{A})\mid \leq 2T\mid a-\lambda
e_{A}\mid +\mid Ta\mid
\end{equation*}%
it follows from $\left( ii\right) $ that $T\mid a\mid \leq \mid Ta\mid $ and
since $T$ is positive, $T\mid a\mid =\mid Ta\mid $ which implies that $T$ is
a lattice homomorphism.\newline
$\left( iii\right) \Rightarrow \left( i\right) :$ By Proposition \ref{p3}, we get
that $Inf\left\{ \mid Ta-\lambda e_{B}\mid :\lambda \in
\mathbb{R}
\right\} =0$ for all $a\in A$, as $e_{B}$ is a weak order unit in $B$. To
prove that $T$ is an extreme point of Markov operators it is enough to show
that for any Markov operator $S$ and $0<\alpha \in
\mathbb{R}
$ satisfying $\alpha T-S\geq 0$ implies that $T=S$. Let $a\in A$ and $%
\lambda \in
\mathbb{R}
$. It follows from%
\begin{equation*}
\mid Sa-S\left( \lambda e_{A}\right) \mid \leq S\mid a-\lambda e_{A}\mid
\leq \alpha T\mid a-\lambda e_{A}\mid =\alpha \mid Ta-\lambda e_{B}\mid
\end{equation*}%
that%
\begin{equation*}
\mid Ta-Sa\mid \leq \mid Ta-\lambda e_{B}\mid +\mid Sa-\lambda e_{B}\mid
\leq \left( 1+\alpha \right) \mid Ta-\lambda e_{B}\mid
\end{equation*}%
Hence $Ta=Sa$ for $a\in A$.
\end{proof}

\begin{definition}\label{d3}
Let $A$ and $B$ be $f$-algebras with point separating order duals and weak
order units $e_{A}$ and $e_{B}$ respectively. In this case, we call a
positive linear operator $T$ from $A$ to $B$ satisfying $T\left(
e_{A}\right) =e_{B}$ to be a weak Markov operator $\left( \text{briefly WMO}%
\right) $.
\end{definition}

Now we remark that in the last step $\left( iii\right) \Rightarrow \left(
i\right) $ of the above proof we proved the following Corollary as well;

\begin{corollary}\label{c1}
Let $A$ and $B$ be a semiprime $f$-algebras with weak order units $e_{A}$
and $e_{B}$ respectively and let $T:A\rightarrow B$ be a weak Markov
operator. If $T$ is a lattice homomorphism, then it is an extreme point in
the set of all weak Markov operators.
\end{corollary}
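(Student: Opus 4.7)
The plan is to mirror the proof of $(iii) \Rightarrow (i)$ in Theorem~\ref{t1} verbatim. That argument never actually used $e_{A}$ or $e_{B}$ being a multiplicative identity; its only ingredients were positivity of the operators involved, the fact that $T$ is a lattice homomorphism satisfying $T(e_{A}) = e_{B}$, and the conclusion of Proposition~\ref{p3} that $\inf\{|Ta - \lambda e_{B}|:\lambda\in\mathbb{R}\}=0$ for every $a\in A$, which in turn needs only that $e_{B}$ is a weak order unit in the semiprime $f$-algebra $B$. So the same reasoning applies here.

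First I would reformulate extremeness: it suffices to show that for every weak Markov operator $S$ and every real $\alpha>0$ with $\alpha T - S\geq 0$, one has $T=S$. Indeed, if $T=(1-\mu)T_{1}+\mu T_{2}$ with $0<\mu<1$ and $T_{1},T_{2}$ weak Markov, then $T \geq (1-\mu)T_{1}$, so applying the claim with $S:=T_{1}$ and $\alpha:=1/(1-\mu)$ forces $T=T_{1}$, and symmetrically $T=T_{2}$.

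For the core estimate, fix $a\in A$ and $\lambda\in\mathbb{R}$, and chain positivity of $S$, the inequality $\alpha T\geq S$ applied to $|a-\lambda e_{A}|\geq 0$, and the lattice-homomorphism identity $T|a-\lambda e_{A}|=|Ta-\lambda e_{B}|$ (which uses $T(e_{A})=e_{B}$):
\begin{equation*}
|Sa-\lambda e_{B}|=|S(a-\lambda e_{A})|\leq S|a-\lambda e_{A}|\leq \alpha T|a-\lambda e_{A}|=\alpha|Ta-\lambda e_{B}|.
\end{equation*}
Combined with the triangle inequality this yields $|Ta-Sa|\leq(1+\alpha)|Ta-\lambda e_{B}|$, and infimising over $\lambda$ via Proposition~\ref{p3} forces $Ta=Sa$. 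The only real obstacle is the bookkeeping verification that no step silently required $e_{A}, e_{B}$ to be units rather than merely weak order units; the enumeration in the first paragraph confirms it does not, which is precisely why the authors remark that the corollary falls out of the proof of Theorem~\ref{t1}.
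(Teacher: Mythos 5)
Your proposal is correct and is essentially identical to the paper's own proof: the authors simply remark that the implication $(iii)\Rightarrow(i)$ of Theorem~\ref{t1} already establishes the corollary, since that step used only positivity, $T(e_{A})=e_{B}$, the lattice homomorphism identity, and Proposition~\ref{p3} (which requires only that $e_{B}$ be a weak order unit). You spell out exactly this observation, and in addition justify the reduction of extremeness to the claim ``$\alpha T-S\geq 0$ with $S$ a weak Markov operator implies $T=S$,'' which the paper asserts without proof.
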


In \cite[Lemma~4.2] {Ref2}, it was proved that a positive
contractive Stone operator between two Stone $f$-algebras is a lattice
homomorphism. At this point we remark that this result is true for all
positive linear operator between any semiprime $f$-algebras. For the
completeness, we repeat this proof in the following Proposition.

\begin{proposition}\label{p4}
Let $A$ and $B$ be semiprime $f$- algebras and $T:A\rightarrow B$ a positive
linear operator. If there exists $0\leq a_{0}\in A$ and $0\leq b_{0}\in B$
such that $T\left( a\wedge a_{0}\right) =T\left( a\right) \wedge b_{0}$, for
all $a\in A$, then $T$ is a lattice homomorphism.
\end{proposition}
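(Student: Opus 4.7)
The plan is to recast the hypothesis as a rigid identity on positive parts, after which the proof closes with one short Riesz-space computation. Using the elementary identities $a \wedge a_0 = a - (a - a_0)^+$ and $T(a) \wedge b_0 = T(a) - (T(a) - b_0)^+$, the hypothesis transforms into
\[T\bigl((a - a_0)^+\bigr) = (T(a) - b_0)^+ \quad \text{for every } a \in A.\]
Substituting $a = c + a_0$ with $c \in A$ arbitrary yields the cleaner form
\[T(c^+) = \bigl(T(c) + T(a_0) - b_0\bigr)^+, \qquad c \in A. \quad (\ast)\]
Setting $a = a_0$ in the original hypothesis already gives $T(a_0) = T(a_0) \wedge b_0 \leq b_0$, so $r := T(a_0) - b_0 \leq 0$.

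I would then replace $c$ by $-c$ in $(\ast)$ to obtain $T(c^-) = \bigl(-T(c) + r\bigr)^+$, and sum the two equations to get
\[T(|c|) = T(c^+) + T(c^-) = (T(c) + r)^+ + (-T(c) + r)^+.\]
The key step is the Riesz-space identity $(y + r)^+ + (-y + r)^+ = (|y| + r)^+$ valid whenever $r \leq 0$. To establish it, I would compute $(y + r) \wedge (-y + r) = r - |y| \leq 0$, whence $(y + r)^+ \wedge (-y + r)^+ = \bigl((y + r) \wedge (-y + r)\bigr)^+ = 0$; the two positive parts being disjoint, their sum equals their supremum $\bigl((y + r) \vee (-y + r)\bigr)^+ = (|y| + r)^+$.

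Applying this with $y = T(c)$ yields $T(|c|) = (|T(c)| + r)^+ \leq |T(c)|$, since $r \leq 0$. Because positivity of $T$ forces the reverse inequality $T(|c|) \geq |T(c)|$, we conclude $T(|c|) = |T(c)|$ for every $c \in A$, which is the standard characterisation of a lattice homomorphism. No substantial obstacle is anticipated; the only point requiring insight is spotting the reformulation $(\ast)$, after which the argument is essentially a short Riesz-space calculation.
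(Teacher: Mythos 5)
Your proof is correct, and it takes a genuinely different route from the paper's. You recast the hypothesis as the positive-part identity $T\left((a-a_{0})^{+}\right)=(Ta-b_{0})^{+}$, translate by $a_{0}$ to obtain $T(c^{+})=(Tc+r)^{+}$ with $r=T(a_{0})-b_{0}\leq 0$, and close with the disjointness identity $(y+r)^{+}+(-y+r)^{+}=(\mid y\mid +r)^{+}$, which gives the exact formula $T(\mid c\mid )=(\mid Tc\mid +r)^{+}\leq \mid Tc\mid $ with no limiting process. The paper instead decomposes $Ta$ in two ways using the scaled elements $a\wedge \frac{1}{n}a_{0}$ (the hypothesis for $\frac{1}{n}a_{0}$, $\frac{1}{n}b_{0}$ follows by homogeneity), arrives at $T(a^{+})-(Ta)^{+}\leq \frac{1}{n}T(a_{0})$, and lets $n\rightarrow \infty $, a step that leans on $B$ being Archimedean. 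Your argument avoids the limit entirely and, as a byproduct, never uses the multiplication, the semiprimeness, or the Archimedean property: it proves the statement for a positive operator between arbitrary Riesz spaces. Each lattice identity you invoke ($a\wedge a_{0}=a-(a-a_{0})^{+}$, $u^{+}\wedge v^{+}=(u\wedge v)^{+}$, $u^{+}\vee v^{+}=(u\vee v)^{+}$, and $u+v=u\vee v$ for disjoint positive $u,v$) is standard and correctly applied, so the argument is complete as written.
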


\begin{proof}
We remark that, for all elements $a$ in $A$ or $B$, the following equations
hold;%
\begin{eqnarray*}
\left( a\wedge a_{0}\right) ^{+} &=&a^{+}\wedge a_{0} \\
\left( a\wedge a_{0}\right) ^{-} &=&a^{-}
\end{eqnarray*}%
Since $T$ is positive, in order to prove that $T$ is a lattice homomorphism
it is enough to show that $T(a^{+})\leq (Ta)^{+}$ for all $a\in A$. By above
remark,%
\begin{eqnarray*}
Ta &=&T\left( a^{+}-a^{-}-(a\wedge \frac{1}{n}a_{0})^{+}+(a\wedge \frac{1}{n}%
a_{0})^{-}\right) +T(a\wedge \frac{1}{n}a_{0}) \\
&=&T\left( a^{+}-(a\wedge \frac{1}{n}a_{0})^{+}\right) +(T(a\wedge \frac{1}{n%
}a_{0}))
\end{eqnarray*}%
and
\begin{eqnarray*}
Ta &=&\left( Ta\right) ^{+}-\left( Ta\right) ^{-}-(T(a\wedge \frac{1}{n}%
a_{0}))^{+}+(T(a\wedge \frac{1}{n}a_{0}))^{-}+T(a\wedge \frac{1}{n}a_{0}) \\
&=&\left( Ta\right) ^{+}-(T(a\wedge \frac{1}{n}a_{0}))^{+}+(T(a\wedge \frac{1%
}{n}a_{0}))
\end{eqnarray*}%
combining these results, we get%
\begin{equation*}
T\left( a^{+}\right) -\left( Ta\right) ^{+}=^{-}T\left( a^{+}\wedge \frac{1}{%
n}a_{0}\right) -\left( Ta\right) ^{+}\wedge \frac{1}{n}b_{0}\leq \frac{1}{n}%
T\left( a_{0}\right)
\end{equation*}%
Passing limit as $n\rightarrow \infty $, we have the desired result.
\end{proof}

\begin{corollary}\label{c2}
\label{7}Let $A$ and $B$ be semiprime $f$- algebras with weak order units $%
e_{A}$ and $e_{B}$ respectively and $T:A\rightarrow B$ a weak Markov
operator. If there exists $0\leq a_{0}\in A$ and $0\leq b_{0}\in B$ such
that $T\left( a\wedge a_{0}\right) =T\left( a\right) \wedge b_{0}$, for all $%
a\in A$, then $T$ is an extreme point of weak Markov operators.
\end{corollary}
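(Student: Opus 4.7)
The statement follows by concatenating two results already in hand, so the plan is simply to verify that the hypotheses of Proposition~\ref{p4} and Corollary~\ref{c1} are met in succession.

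First I would note that $T$ is positive (being a weak Markov operator) and that $A$, $B$ are semiprime $f$-algebras, so the hypotheses of Proposition~\ref{p4} are satisfied verbatim by assumption: the given $a_0\geq 0$ in $A$ and $b_0\geq 0$ in $B$ yield $T(a\wedge a_0)=T(a)\wedge b_0$ for every $a\in A$. Applying Proposition~\ref{p4}, I obtain that $T$ is a lattice homomorphism.

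Second, $T$ is a weak Markov operator (by hypothesis) between semiprime $f$-algebras with weak order units $e_A$ and $e_B$, and by the previous step it is also a lattice homomorphism. These are exactly the hypotheses of Corollary~\ref{c1}, so I conclude that $T$ is an extreme point in the set of all weak Markov operators from $A$ into $B$.

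There is no real obstacle here since all the substantive work was done in Proposition~\ref{p4} (reducing the intersection-preserving condition to the lattice homomorphism property) and in the $(iii)\Rightarrow(i)$ step of Theorem~\ref{t1} that was extracted as Corollary~\ref{c1}. The proof is therefore just the chain \textbf{Proposition~\ref{p4}} $\Longrightarrow$ lattice homomorphism $\Longrightarrow$ \textbf{Corollary~\ref{c1}} $\Longrightarrow$ extreme point.
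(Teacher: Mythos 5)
Your proposal is correct and follows exactly the paper's own argument: Proposition~\ref{p4} gives that $T$ is a lattice homomorphism, and Corollary~\ref{c1} then gives that $T$ is an extreme point of the weak Markov operators. Nothing is missing.
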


\begin{proof}
By Proposition \ref{p4}, $T$ is a lattice homomorphism and by Corollary \ref{c1}, $T$
is an extreme point of weak Markov operators.
\end{proof}

\begin{theorem}\label{t2}
Let $A$ and $B$ be a semiprime $f$-algebras with weak order units $e_{A}$
and $e_{B}$ respectively and $T:A\rightarrow B$ a weak Markov operator. If $%
B $ is order complete, then $T$ is an extreme point of the set of all weak
Markov operators if and only if $T$ is a lattice homomorphism.
\end{theorem}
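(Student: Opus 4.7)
The direction ``lattice homomorphism $\Rightarrow$ extreme point'' is exactly Corollary~\ref{c1}, so the substantive work lies in the converse. My plan is to adapt the route $(i)\Rightarrow(ii)\Rightarrow(iii)$ of Theorem~\ref{t1} to the weak order unit setting, with the order completeness of $B$ (so that $\widehat B=B$) ensuring that the relevant infima actually land in $B$.

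A preliminary observation is that the equivalence $(ii)\Leftrightarrow(iii)$ of Theorem~\ref{t1} survives verbatim in the weak Markov framework: the triangle-inequality chain $T|a|\le T|a-\lambda e_A|+T|\lambda e_A|$ together with $|\lambda|e_B=|T(\lambda e_A)|\le T|a-\lambda e_A|+|Ta|$ gives $T|a|\le 2\,T|a-\lambda e_A|+|Ta|$, and this uses only $T(e_A)=e_B$, not that $e_A,e_B$ are multiplicative units. Conversely, for a lattice homomorphism $T|a-\lambda e_A|=|Ta-\lambda e_B|$, whose infimum over $\lambda$ vanishes by Proposition~\ref{p3} applied inside the semiprime $f$-algebra $B$ at its weak order unit $e_B$. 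Hence it suffices to show that an extreme weak Markov $T$ satisfies $\inf\{T|a-\lambda e_A|:\lambda\in\mathbb R\}=0$ in $B$ for every $a\in A$.

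Suppose otherwise: pick $a_0\in A^+$ and $0<b\in B$ with $b\le T|a_0-\lambda e_A|$ for every $\lambda\in\mathbb R$. Squaring and invoking the Schwarz inequality \cite[Corollary~3.5]{Ref3} followed by Proposition~3.3 of \cite{Ref3}, exactly as in the derivation of (\ref{eq1}), would yield a Schwarz-type bound $(Ta_0)^2\le T(a_0^2)-b^2$. For each $n\ge 1$ set $a_n=(a_0/n)\wedge e_A\in A^+$ and introduce the perturbations $T_{a_n}(x)=T(a_nx)-T(a_n)T(x)$ as in (\ref{eq2}). Extremality of $T$ among weak Markov operators should force $T_{a_n}\equiv 0$; the relatively uniform limit $a_0\wedge ne_A\to a_0$ from Proposition~2.1(i) of \cite{Ref5}, together with the relatively uniformly continuous extension $\overline T:\overline A\to B$ granted by Proposition~\ref{p1} (using $\widehat B=B$), then produces $T(a_0^2)=(Ta_0)^2$, contradicting the Schwarz bound since $B$ is semiprime.

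The step I expect to be the main obstacle is verifying that $T\pm T_{a_n}$ are weak Markov operators. The identity $T_{a_n}(e_A)=T(a_ne_A)-T(a_n)e_B$ and the positivity of $T-T_{a_n}$ both depended in Theorem~\ref{t1} on the equalities $a_n\cdot e_A=a_n$ and $T(a_n)\cdot e_B=T(a_n)$, neither of which survives when $e_A,e_B$ are merely weak order units. The remedy I would pursue is to perform the perturbation argument after the relatively uniform extension $\overline T:\overline A\to B$: the Stone condition on $\overline A$ puts the truncations $a_0\wedge n\overline I$ inside the unital $f$-algebra $\overline A\cap Z(\overline A)$, with $\overline I$ acting as a genuine unit; on this ideal the construction of Theorem~\ref{t1} applies directly, extremality of $\overline T$ forces the corresponding $\overline T_{a_n}$ to vanish, and relative uniform continuity transports the resulting multiplicative identity back to $T$ on $A$.
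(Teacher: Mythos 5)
Your ``lattice homomorphism $\Rightarrow$ extreme point'' direction is exactly Corollary~\ref{c1} and is fine, and your observation that $(ii)\Leftrightarrow(iii)$ of Theorem~\ref{t1} survives with weak order units is correct. The gap is in the leg $(i)\Rightarrow(ii)$, and it is precisely the obstacle you yourself flag: the proposed remedy does not remove it. Passing to $\overline{A}\cap Z(\overline{A})$ replaces $e_A$ by the orthomorphism identity $\overline{I}$, but the normalization you are given is $T(e_A)=e_B$ for a weak order unit, not $\overline{T}(\overline{I})=I_B$; the weak Markov hypothesis gives no control over $\overline{T}$ on the ideal center. Consequently neither $\overline{T}_{a_n}(e_A)=\overline{T}(a_ne_A)-\overline{T}(a_n)e_B=0$ nor the positivity of $\overline{T}\pm\overline{T}_{a_n}$ (which needs $\overline{T}(a_n)$ to act as an element of $Z(B)$ dominated by the identity) is established, so the perturbations are not admissible competitors and extremality cannot be invoked to kill them. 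The same defect already undermines the very first step: the Schwarz inequality $(Ta)^2\le T(a^2)$ of \cite[Corollary~3.5]{Ref3} is a statement about Markov operators between \emph{unital} $f$-algebras --- the identities $e_A^2=e_A$ and $Te_A=e_B=e_B^2$ are what produce the coefficient $\lambda^2e_B$ in the quadratic --- so the bound $(Ta_0)^2\le T(a_0^2)-b^2$ is not available when $e_A$, $e_B$ are merely weak order units. As written, the hard direction is not proved.

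The paper takes an entirely different and much shorter route that sidesteps all of this. The set $M=\{\lambda e_A:\lambda\in\mathbb{R}\}$ is a one-dimensional Riesz subspace of $A$, the map $f(\lambda e_A)=\lambda e_B$ is a lattice homomorphism on $M$, and the weak Markov operators are exactly the positive extensions of $f$ to $A$. Since $B$ is order complete, \cite[Theorem~2(a)]{Ref7} applies and says that an extreme point of the set of positive extensions of a lattice homomorphism is itself a lattice homomorphism; the converse direction likewise follows from \cite[Theorem~2(b)]{Ref7} together with Proposition~\ref{p3}. If you want to salvage your internal argument you would have to rebuild the perturbation so that it is normalized at $e_A$ rather than at the multiplicative identity, which is exactly the difficulty that the appeal to Lipecki's extension theorem is designed to avoid.
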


\begin{proof}
First we remark that any weak Markov operator is an extension of the
operator $f:M\rightarrow B$, defined by $f\left( \lambda e_{A}\right)
=\lambda e_{B}$, where $M=\left\{ \lambda e_{A}:\lambda \in
\mathbb{R}
\right\} $. It is not difficult to see that the extreme point of the
collection of all weak Markov operators is the same of the extreme point of
all positive extensions of $f$ to the whole $A$. Let $T~$\ be an extreme
point in the set of all weak Markov operators. Then $T$ is an extreme point
of the set of all positive extensions of $f$ to $A$. Since $f$ is a lattice
homomorphism, by Theorem $2\left( a\right) $ in \cite{Ref7},\textrm{\
}we derive that $T$ is a lattice homomorphism. Conversely assume that $T$ is
a lattice homomorphism. Taking into account Proposition \ref{p3} and Theorem $%
2\left( b\right) $ in \cite{Ref7}\textrm{,} we conclude that $T$ is
an extreme point of the weak Markov operators.
\end{proof}

\end{document}